\newfont{\footsc}{cmcsc10 at 8truept}
\newfont{\footbf}{cmbx10 at 8truept}
\newfont{\footrm}{cmr10 at 10truept}
\newtheorem{theorem}{Theorem}
\newtheorem{conjecture}[theorem]{Conjecture}
\newtheorem{corollary}[theorem]{Corollary}
\newenvironment{proof}[1][Proof]{\noindent{\textbf {#1}  }}  {\hfill$\Box$\bigskip}
\begin{document}

\title{\textbf{Graph functions maximized on a path} }
\author{Celso Marques da Silva Jr\thanks{ PEP-COPPE, Universidade Federal do Rio de
Janeiro, Brasil; \textit{email: celso.junior@ifrj.edu.br}} \thanks{Research
partially supported by CNPq-Brasil} \ and Vladimir Nikiforov\thanks{Department
of Mathematical Sciences, University of Memphis, Memphis TN 38152, USA;
\textit{email: vnikifrv@memphis.edu}}}
\maketitle

\begin{abstract}
Given a connected graph $G\ $of order $n$ and a nonnegative symmetric matrix
$A=\left[  a_{i,j}\right]  $ of order $n,$ define the function $F_{A}\left(
G\right)  $ as%
\[
F_{A}\left(  G\right)  =\sum_{1\leq i<j\leq n}d_{G}\left(  i,j\right)
a_{i,j},
\]
where $d_{G}\left(  i,j\right)  $ denotes the distance between the vertices
$i$ and $j$ in $G.$

In this note it is shown that $F_{A}\left(  G\right)  \leq F_{A}\left(
P\right)  \,$for some path of order $n.$ Moreover, if each row of $A$ has at
most one zero off-diagonal entry, then $F_{A}\left(  G\right)  <F_{A}\left(
P\right)  \,$for some path of order $n,$ unless $G$ itself is a path.

In particular, this result implies two conjectures of Aouchiche and Hansen:

- the spectral radius of the distance Laplacian of a connected graph $G$ of
order $n$ is maximal if and only if $G$ is a path;

- the spectral radius of the distance signless Laplacian of a connected graph
$G$ of order $n$ is maximal if and only if $G$ is a path.\bigskip

\textbf{AMS classification: }\textit{15A42; 05C50.}

\textbf{Keywords:}\textit{ distance matrix; distance Laplacian; distance
signless Laplacian; largest eigenvalue; path.}

\end{abstract}

\section{Introduction and main results}

The aim of the present note is to give a general approach to problems like the
following conjectures of Aouchiche and Hansen \cite{AuHa11,AuHa13a}:

\begin{conjecture}
\label{con1}The largest eigenvalue of the distance Laplacian of a connected
graph $G$ of order $n$ is maximal if and only if $G$ is a path.
\end{conjecture}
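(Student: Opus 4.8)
The plan is to prove Conjecture~\ref{con1} by showing that it follows as a special case of the main theorem on $F_A(G)$. The key observation is that the largest eigenvalue of the distance Laplacian can be written as a maximum of a quadratic form, and hence bounded by a quantity of the form $F_A(G)$ for a suitable nonnegative symmetric matrix $A$ built from the coordinates of the extremal eigenvector.

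\begin{proof}
Let $D_G$ denote the distance matrix of $G$, let $\mathrm{Tr}_G$ be the diagonal matrix whose $i\th$ entry is the transmission $\sum_{j} d_G(i,j)$ of vertex $i$, and let $\mathcal L_G = \mathrm{Tr}_G - D_G$ be the distance Laplacian. First I would fix a connected graph $G$ of order $n$ and let $x=(x_1,\dots,x_n)$ be a unit eigenvector corresponding to the largest eigenvalue $\partial_1^{\mathcal L}(G)$ of $\mathcal L_G$. Using the standard representation of the Laplacian quadratic form, I would write
\[
\partial_1^{\mathcal L}(G)=x^{\mathsf T}\mathcal L_G x=\sum_{1\leq i<j\leq n} d_G(i,j)\,(x_i-x_j)^2.
\]
Now define the symmetric matrix $A=[a_{i,j}]$ by $a_{i,j}=(x_i-x_j)^2$ for $i\neq j$ and $a_{i,i}=0$; this matrix is nonnegative, so the main theorem applies and yields a path $P$ of order $n$ with $F_A(G)\leq F_A(P)$, i.e.\ $\partial_1^{\mathcal L}(G)\leq F_A(P)$.
\end{proof}

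The remaining step is to bound $F_A(P)$ by $\partial_1^{\mathcal L}(P)$, closing the chain of inequalities. Here I would use the Rayleigh principle in the reverse direction: since $F_A(P)=\sum_{i<j} d_P(i,j)(x_i-x_j)^2=x^{\mathsf T}\mathcal L_P x$ for the \emph{same} vector $x$, and $x$ is a unit vector, we have $x^{\mathsf T}\mathcal L_P x\leq \partial_1^{\mathcal L}(P)$. Combining gives $\partial_1^{\mathcal L}(G)\leq \partial_1^{\mathcal L}(P)$, so the path maximizes the largest eigenvalue. For the uniqueness (the ``only if'' direction), I would invoke the strict version of the main theorem: since the matrix $A$ built from an eigenvector of a connected non-path graph has at most one zero off-diagonal entry in each row (one must verify that an extremal eigenvector has distinct coordinates, so that $a_{i,j}=(x_i-x_j)^2>0$ whenever $i\neq j$), the strict inequality $F_A(G)<F_A(P)$ holds unless $G$ is a path, forcing $\partial_1^{\mathcal L}(G)<\partial_1^{\mathcal L}(P)$.

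The main obstacle I anticipate is the uniqueness argument, specifically verifying the hypothesis that each row of $A$ has at most one zero off-diagonal entry. This requires showing that the eigenvector $x$ associated with $\partial_1^{\mathcal L}(G)$ cannot have two equal coordinates $x_i=x_j$, or more carefully, that any coincidences among coordinates are few enough that each row of $A$ has at most one vanishing off-diagonal entry. A clean way to handle this is to perturb: if several coordinates coincide, one can argue via the eigen-equation $\mathcal L_G x=\partial_1^{\mathcal L}(G)x$ that equal coordinates force structural constraints on $G$ incompatible with $G$ being extremal yet not a path, or alternatively replace the exact eigenvector by a slightly perturbed vector with distinct entries at negligible cost to the quadratic form. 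I would treat the two Aouchiche--Hansen conjectures together, noting that the distance signless Laplacian $\mathrm{Tr}_G+D_G$ is handled identically with $A$ defined via $(x_i+x_j)^2$ in place of $(x_i-x_j)^2$.
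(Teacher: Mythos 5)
Your first half is correct and is exactly the paper's argument: represent $\partial_1^{\mathcal L}(G)$ as $F_A(G)$ with $a_{i,j}=(x_i-x_j)^2$, apply Theorem \ref{thg} to get a path $P$ with $F_A(G)\leq F_A(P)$, and then use the Rayleigh principle with the \emph{same} unit vector $x$ to get $F_A(P)=\langle D^{L}(P)x,x\rangle\leq\partial_1^{\mathcal L}(P)$. The gap is in the uniqueness (``only if'') direction. You propose to verify that the eigenvector of the extremal non-path graph $G$ has (essentially) distinct coordinates, either by structural analysis of the eigen-equation on $G$ or by perturbation. Neither route works as stated. Being a non-path connected graph by itself gives no distinctness: any nontrivial automorphism of $G$ forces an eigenvector of a simple eigenvalue to be constant on vertex orbits (a star is the standard example), so distinctness can only come from the extremality of $G$ --- and you have no tool that extracts it from extremality directly. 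The perturbation fallback fails for a structural reason: replacing $x$ by a nearby vector $x'$ with distinct entries gives a matrix $A'$ to which Corollary \ref{cor1} applies, yielding a path $P'$ with $F_{A'}(P')>F_{A'}(G)$; but now $F_{A'}(G)<\partial_1^{\mathcal L}(G)$ strictly (since $x'$ is no longer an eigenvector), and the strict gap furnished by Corollary \ref{cor1} is not quantified, so it may shrink to zero with the perturbation. You cannot conclude $\partial_1^{\mathcal L}(P')>\partial_1^{\mathcal L}(G)$, which is what the contradiction needs.

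The paper closes this gap with a step you are missing: it never proves anything about the eigenvector of $G$ per se. Assuming $G$ is maximal, the chain
\[
\partial_1^{\mathcal L}(G)=F_A(G)\leq F_A(P)\leq\partial_1^{\mathcal L}(P)\leq\partial_1^{\mathcal L}(G)
\]
collapses to equalities; in particular $\langle D^{L}(P)x,x\rangle=\partial_1^{\mathcal L}(P)$ with $\left\Vert x\right\Vert=1$, so $x$ is an eigenvector to the largest eigenvalue of the distance Laplacian \emph{of the path} $P$. This transfers the distinctness question from the unknown extremal graph $G$ to the concrete graph $P_n$, for which Nath and Paul (Theorems 4.4 and 4.6 of \cite{NaPa14}) proved that all entries of such an eigenvector are pairwise distinct. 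Hence all off-diagonal entries of the original, unperturbed $A$ are positive, Corollary \ref{cor1} applies, and if $G$ were not a path one would get a path $P''$ with $\partial_1^{\mathcal L}(G)=F_A(G)<F_A(P'')\leq\partial_1^{\mathcal L}(P'')$, contradicting maximality. Without this equality-transfer step (or an independent proof of a Nath--Paul-type distinctness statement), your uniqueness argument does not close.
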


\begin{conjecture}
\label{con2}The largest eigenvalue of the distance signless Laplacian of a
connected graph $G$ of order $n$ is maximal if and only if $G$ is a path.
\end{conjecture}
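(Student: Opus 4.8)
The plan is to show that the maximum of $F_A$ over connected graphs on the vertex set $\{1,\dots,n\}$ is attained by a path, and that, under the row hypothesis, it is attained \emph{only} by paths. First I would record two monotonicity facts. If $G$ is connected but not a tree, then deleting an edge $e=xy$ on a cycle keeps $G$ connected and cannot decrease any distance $d_G(i,j)$, so $F_A(G)\le F_A(G-e)$; iterating, some spanning tree $T$ satisfies $F_A(T)\ge F_A(G)$, and a maximizer may be taken to be a tree. Among all \emph{tree} maximizers I would then fix one, call it $T$, of largest possible diameter $d$, and fix a diametral path $P=v_0v_1\cdots v_d$. Observe that $v_0$ and $v_d$ must be leaves of $T$, since an extra neighbour at either endpoint would produce a longer path.

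The heart of the argument is a relocation lemma. Suppose $T$ is not a path; then $d<n-1$ and some internal vertex $v_i$ ($0<i<d$) carries a pendant subtree $S$ meeting $P$ only at $v_i$. Let $T_0$ (resp.\ $T_d$) be obtained by detaching $S$ and re-attaching its root to $v_0$ (resp.\ $v_d$). Because $S$ moves rigidly, for each $x\notin S$ the distance from \emph{every} $s\in S$ to $x$ changes by the same amount, namely $f(x)=d(v_0,x)-d(v_i,x)$ for the move to $v_0$ and $g(x)=d(v_d,x)-d(v_i,x)$ for the move to $v_d$. The key observation is that along the geodesic $v_0,\dots,v_d$ the function $t\mapsto d(v_t,x)$ is convex (slope $-1$ up to the projection of $x$ onto $P$, slope $+1$ thereafter); since $v_i$ sits at parameter $i$, convexity gives $d\cdot d(v_i,x)\le (d-i)\,d(v_0,x)+i\,d(v_d,x)$, that is $(d-i)f(x)+i\,g(x)\ge 0$ for every $x$. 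Writing the two weighted distance changes explicitly and combining them with the coefficients $d-i$ and $i$ yields
\[
(d-i)\big(F_A(T_0)-F_A(T)\big)+i\big(F_A(T_d)-F_A(T)\big)=\sum_{s\in S,\ x\notin S}a_{s,x}\big[(d-i)f(x)+i\,g(x)\big]\ge 0 ,
\]
so at least one of $T_0,T_d$ has $F_A$ no smaller than that of $T$.

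This finishes the weak statement. Attaching $S$ to an endpoint strictly lengthens the diametral path, since the new tree contains a path from a leaf of $S$ through that endpoint to the opposite end, of length at least $d+1$. Hence whichever of $T_0,T_d$ is selected is again a tree maximizer but of strictly larger diameter than $T$, contradicting the choice of $T$ unless $T$ was already a path. Therefore $F_A(G)\le F_A(T)\le F_A(P)$ for the maximizing path $P$.

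For the strict statement I would track when the displayed inequality is strict. A short computation shows the bracket $(d-i)f(x)+i\,g(x)$ equals $2j(d-i)$ or $2i(d-j)$ according as the projection $v_j$ of $x$ satisfies $j\le i$ or $j\ge i$; thus it vanishes only when $j\in\{0,d\}$, and since $v_0,v_d$ are leaves this means exactly $x=v_0$ or $x=v_d$. So the relocation strictly increases $F_A$ as soon as some $s\in S$ has positive weight to an interior vertex $x\notin S$. Under the hypothesis that each row of $A$ has at most one zero off-diagonal entry, such a pair exists whenever $d\ge 3$: the interior vertices outside $S$ number at least $d-1$, which exceeds the single permitted zero; the residual case $d=2$ (where $T$ is a star) is checked directly. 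Combined with the cycle-deletion step—where the row hypothesis forces, at any cycle vertex, a cycle edge $xy$ with $a_{xy}>0$ and hence $F_A(G)<F_A(G-xy)$—this shows every connected non-path $G$ satisfies $F_A(G)<F_A(P)$. The crux of the whole argument is the relocation lemma: isolating the convexity of $d(\,\cdot\,,x)$ along the diametral geodesic and the weighting $(d-i,\,i)$ that upgrades a merely pointwise inequality into the usable global bound above.
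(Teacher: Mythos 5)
Your write-up, as it stands, does not prove the statement it is supposed to prove: Conjecture \ref{con2} is about the largest eigenvalue of $D^{Q}(G)=T(G)+D(G)$, and neither $D^{Q}$ nor any eigenvalue or eigenvector appears anywhere in your argument. What you actually prove is the purely combinatorial assertion that, when each row of $A$ has at most one zero off-diagonal entry, the maximum of $F_{A}$ over connected graphs of order $n$ is attained only by paths; that is Theorems \ref{thg} and \ref{ths} (hence Corollary \ref{cor1}) of the paper, not the conjecture. The missing bridge is short but essential, and it is where all the spectral content lives. Let $G$ maximize $\lambda(D^{Q}(\cdot))$ over connected graphs of order $n$ and let $\mathbf{x}=(x_{1},\dots,x_{n})$ be a unit eigenvector to $\lambda(D^{Q}(G))$. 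Since $D^{Q}(G)$ is nonnegative and irreducible, Perron--Frobenius gives $\mathbf{x}>0$. Setting $a_{i,j}=(x_{i}+x_{j})^{2}$, one has $\lambda(D^{Q}(G))=\langle D^{Q}(G)\mathbf{x},\mathbf{x}\rangle=\sum_{i<j}d_{G}(i,j)(x_{i}+x_{j})^{2}=F_{A}(G)$, and every off-diagonal entry of $A$ is positive, so your strict theorem applies: if $G$ were not a path, there would be a path $P$ with $F_{A}(G)<F_{A}(P)$, and $F_{A}(P)=\langle D^{Q}(P)\mathbf{x},\mathbf{x}\rangle\le\lambda(D^{Q}(P))$ by the Rayleigh principle, contradicting the choice of $G$. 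Note that the positivity of the Perron vector is exactly what guarantees your row hypothesis; this is the point at which Conjecture \ref{con2} is genuinely easier than Conjecture \ref{con1}, where the eigenvector need not be positive and the paper must invoke a result of Nath and Paul. You clearly had this reduction in mind (otherwise the row hypothesis would be unmotivated), but it has to be on the page.

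The combinatorial core itself is correct, and your proof of it is genuinely different from the paper's. The paper proceeds by induction on $n$: remove a pendant vertex $n$ with neighbour $k$, absorb row and column $n$ of $A$ into row and column $k$ (producing a matrix $A'$ that must be checked to remain in the admissible class), apply induction to $G-n$, reattach $n$ to $k$ on the resulting path, and finally show that $n$ can be slid to one of the two ends without decreasing $F_{A}$; non-trees are then handled through unicyclic spanning subgraphs. You instead argue globally: among tree maximizers take one of largest diameter, and use the convexity of $t\mapsto d(v_{t},x)$ along a diametral path $v_{0}\cdots v_{d}$, with weights $d-i$ and $i$, to relocate any pendant subtree at an interior vertex $v_{i}$ to one of the two ends without loss; since relocation increases the diameter, extremality of the diameter forces the maximizer to be a path. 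This avoids the matrix surgery entirely, treats a whole pendant subtree in one move, and your disposal of non-trees (every cycle vertex has an incident cycle edge of positive weight, whose deletion strictly increases $F_{A}$) is more direct than the paper's unicyclic detour. Two details must still be written out. First, your strictness count (at least $d-1$ interior vertices of the diametral path outside $S$, versus one permitted zero per row) requires $d\ge3$. Second, the star case $d=2$ is not quite ``direct'': for $n\ge5$ it goes through because the vertices of the other pendant subtrees also carry positive bracket, but for $n=4$ a single relocation pair only forces $a_{4,2}=0$ and yields no contradiction; one must relocate all three leaves, forcing three zero off-diagonal entries in the centre's row --- which is precisely the separate $n=4$ base case in the paper's proof of Theorem \ref{tht}.
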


First, let us introduce some notation and recall a few definitions. We write
$\lambda\left(  A\right)  $ for the largest eigenvalue of a symmetric matrix
$A$. Given a connected graph $G,$ let $D\left(  G\right)  $ be the distance
matrix of $G,$ and let $T\left(  G\right)  $ be the diagonal matrix of the
rowsums of $D\left(  G\right)  .$ The matrix $D^{L}\left(  G\right)  =T\left(
G\right)  -D\left(  G\right)  $ is called the \emph{distance Laplacian }of
$G,$ and the matrix $D^{Q}\left(  G\right)  =T\left(  G\right)  +D\left(
G\right)  $ is called the \emph{distance signless Laplacian} of $G$. The
matrices $D^{L}\left(  G\right)  $ and $D^{Q}\left(  G\right)  $ have been
introduced by Aouchiche and Hansen and have been intensively studied recently,
see, e.g., \cite{AuHa11,AuHa13a,AuHa13,LiLu14,NaPa14,XZL13}.\medskip

Very recently, Lin and Lu \cite{LiLu14} succeeded to prove Conjecture
\ref{con2}, but Conjecture \ref{con1} seems a bit more difficult and still
holds. Furthermore, Conjectures \ref{con1} and \ref{con2} suggest a similar
problem for the distance matrix itself. As it turns out such problem has been
partially solved a while ago by Ruzieh and Powers \cite{RuPo90}, who showed
that the largest eigenvalue of the distance matrix of a connected graph $G$ of
order $n$ is maximal if $G$ is a path. The complete solution, however, was
given more recently by Stevanovi\'{c} and Ili\'{c} \cite{StIl10}.

\begin{theorem}
[\textbf{\cite{RuPo90},\cite{StIl10}}]\label{thSI}The largest eigenvalue of
the distance matrix of a connected graph $G$ of order $n$ is maximal if and
only if $G$ is a path.
\end{theorem}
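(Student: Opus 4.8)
The plan is to combine Perron--Frobenius monotonicity with a tree-grafting argument. Since $G$ is connected, every off-diagonal entry of the distance matrix $D\left(  G\right)  $ is a positive integer, so $D\left(  G\right)  $ is a nonnegative irreducible symmetric matrix; by the Perron--Frobenius theorem its largest eigenvalue $\lambda\left(  D\left(  G\right)  \right)  $ is simple and admits a positive eigenvector. I shall use throughout the Rayleigh characterization
\[
\lambda\left(  D\left(  G\right)  \right)  =\max_{\left\Vert x\right\Vert =1}x^{\mathsf{T}}D\left(  G\right)  x,
\]
together with the standard fact that if $A$ and $B$ are nonnegative symmetric matrices with $A\leq B$ entrywise, $A\neq B$, and $B$ irreducible, then $\lambda\left(  A\right)  <\lambda\left(  B\right)  $.

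First I would reduce the problem to trees. If $G$ is connected but contains a cycle, fix a spanning tree $T\subseteq G$. Deleting edges cannot decrease any distance, so $d_{T}\left(  i,j\right)  \geq d_{G}\left(  i,j\right)  $ for all $i,j$; moreover the two endpoints of an edge of $G$ not in $T$ are at distance $1$ in $G$ but at distance at least $2$ in $T$, so $D\left(  T\right)  \geq D\left(  G\right)  $ entrywise with at least one strict inequality. Since $D\left(  T\right)  $ is irreducible, the monotonicity principle gives $\lambda\left(  D\left(  T\right)  \right)  >\lambda\left(  D\left(  G\right)  \right)  $. Hence any maximizer over connected graphs of order $n$ must be a tree, and it suffices to prove that among trees of order $n$ the path is the unique maximizer.

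For the tree case I would use a grafting transformation. Let $T$ be a tree that is not a path; then $T$ has a vertex of degree at least $3$, so there are two pendant paths issuing from a common branch vertex $w$, of lengths $p\geq q\geq 1$. Let $x$ be the positive Perron eigenvector of $D\left(  T\right)  $. Detaching the shorter pendant path from $w$ and appending it to the end of the longer one produces a tree $T^{\prime}$ with one fewer leaf, and the aim is to show
\[
x^{\mathsf{T}}D\left(  T^{\prime}\right)  x-x^{\mathsf{T}}D\left(  T\right)  x=\sum_{i<j}\bigl(d_{T^{\prime}}\left(  i,j\right)  -d_{T}\left(  i,j\right)  \bigr)x_{i}x_{j}>0,
\]
which by the Rayleigh quotient forces $\lambda\left(  D\left(  T^{\prime}\right)  \right)  >\lambda\left(  D\left(  T\right)  \right)  $; iterating terminates at the unique tree of order $n$ with two leaves, namely the path, and the strict inequalities yield both existence and uniqueness of the maximizer. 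The main obstacle is signing the displayed sum. Most pairwise distances are unchanged by the graft, so the sum reduces to contributions from pairs separated by the relocated path; controlling their signs requires knowing how $x$ behaves along pendant paths, specifically that the entries of the Perron eigenvector increase monotonically toward a leaf, as they do for the path itself. Establishing this monotonicity from the eigenvalue equation $\lambda x_{i}=\sum_{j}d_{T}\left(  i,j\right)  x_{j}$ and then bookkeeping the distance changes so that the weighted sum is provably positive is the delicate technical core of the argument.
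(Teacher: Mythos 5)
Your reduction to trees is correct: passing to a spanning tree weakly increases every distance and strictly increases at least one, so Perron--Frobenius monotonicity gives $\lambda\left(D\left(T\right)\right)>\lambda\left(D\left(G\right)\right)$ whenever $G$ contains a cycle. The gap is in the tree case, and it is not a small one: the inequality $x^{\mathsf{T}}D\left(T^{\prime}\right)x>x^{\mathsf{T}}D\left(T\right)x$ that drives your grafting step is exactly the content of the theorem for trees, and you do not prove it --- you only identify it as ``the delicate technical core.'' Worse, the auxiliary lemma you propose to prove it with is false as stated. The Perron entries of a distance matrix are \emph{not} monotone increasing toward a leaf along a pendant path: already for the path $P_{n}$ itself (Ruzieh--Powers \cite{RuPo90}) the entries are smallest near the center and increase toward both ends, so along the pendant path $v_{2},\ldots,v_{n}$ hanging from $v_{1}$ they first decrease and then increase; the same unimodal behaviour occurs along the long arm of a broom, which is precisely the kind of tree your graft must handle. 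Note also that the graft changes distances in both directions --- every vertex of the moved path gets farther (by $p$) from the part of the tree off the two arms, but strictly closer to the outer half of the longer arm --- so the sign bookkeeping must beat negative terms that are weighted by the largest eigenvector entries. Supplying this is essentially the calculation-heavy argument of \cite{StIl10}, none of which appears in your outline.

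For contrast, the paper's proof needs nothing about the shape of the eigenvector beyond positivity. It first proves a purely combinatorial statement (Theorem \ref{thg} and Corollary \ref{cor1}): for any symmetric nonnegative weight matrix $A$ with positive off-diagonal entries, the functional $F_{A}\left(G\right)=\sum_{1\leq i<j\leq n}d_{G}\left(i,j\right)a_{i,j}$ is maximized over connected graphs only by paths. That statement is proved by induction: delete a leaf $n$ with neighbour $k$, fold row $n$ of $A$ into row $k$ (replacing $a_{k,j}$ by $a_{k,j}+a_{n,j}$), apply the induction hypothesis to get a path on $\left[n-1\right]$, reattach $n$ at $k$, and then show $n$ can be moved to an end of that path without decreasing the functional. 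The spectral theorem then follows in two lines: if $G$ maximizes $\lambda\left(D\left(\cdot\right)\right)$ and $x$ is its positive unit Perron vector, set $a_{i,j}=x_{i}x_{j}$, so that $\lambda\left(D\left(G\right)\right)=2F_{A}\left(G\right)$; if $G$ were not a path, Corollary \ref{cor1} would produce a path $P$ with $\lambda\left(D\left(P\right)\right)\geq2F_{A}\left(P\right)>2F_{A}\left(G\right)=\lambda\left(D\left(G\right)\right)$, contradicting maximality. To salvage your route you would have to reproduce the Stevanovi\'{c}--Ili\'{c} computation; the cleaner fix is to abandon structural claims about the eigenvector and let its entries enter only as fixed positive weights, which is what the paper's combinatorial theorem is designed for.
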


These result are believed to belong to spectral graph theory, and their proofs
involve nonnegligible amount of calculations. Our goal is to show that all
these results stem from a much more general assertion that has nothing to do
with eigenvalues. To this end, we shall introduce a fairly general graph
function and shall study its maxima.

\subsection{The function $F_{A}\left(  G\right)  $ and its maxima}

Let $G$ be a connected graph of order $n.$ Write $d_{G}\left(  i,j\right)  $
for the distance between the vertices $i$ and $j$ in $G,$ and let $A=\left[
a_{i,j}\right]  $ be a nonnegative symmetric matrix of order $n.$ Define the
function $F_{A}\left(  G\right)  $ as%
\[
F_{A}\left(  G\right)  =\sum_{1\leq i<j\leq n}d_{G}\left(  i,j\right)
a_{i,j}.
\]
Clearly $d_{G}\left(  i,i\right)  =0$ for any $i\in V\left(  G\right)  ,$ so
the diagonal of $A$ is irrelevant for $F_{A}\left(  G\right)  $.

In fact, the function $F_{A}\left(  G\right)  $ is quite mainstream, as it can
be represented as%
\[
F_{A}\left(  G\right)  =\left\Vert A\circ D\left(  G\right)  \right\Vert
_{l_{1}},
\]
where $\circ$ denotes the entrywise Hadamard product of matrices, and
$\left\Vert \cdot\right\Vert _{l_{1}}$ is the $l_{1}$ norm. This viewpoint
suggests a number of extensions, which we shall investigate elsewhere.\medskip

Next, we focus on the extremal points of $F_{A}\left(  G\right)  ,$ that is to
say, we want to know which connected graphs $G$ of order $n$ satisfy the
condition%
\[
F_{A}\left(  G\right)  =\max\left\{  F_{A}\left(  H\right)  :\text{ }H\text{
is a connected graph of order }n\right\}  .\text{ }%
\]
In particular, we prove the somewhat surprising fact that for any admissible
matrix $A,$ the function $F_{A}\left(  G\right)  $ is always maximized by a
path. More precisely the following theorem holds.

\begin{theorem}
\label{thg}Let $G$ be a connected graph of order $n$ and let $A=\left[
a_{i,j}\right]  $ be a symmetric matrix of order $n.$ If $A$ is nonnegative,
then there is a path $P$ with $V\left(  P\right)  =V\left(  G\right)  $\ such
that
\begin{equation}
\sum_{1\leq i<j\leq n}d_{G}\left(  i,j\right)  a_{i,j}\leq\sum_{1\leq i<j\leq
n}d_{P}\left(  i,j\right)  a_{i,j}. \label{mi}%
\end{equation}

\end{theorem}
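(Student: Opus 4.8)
The plan is to prove~\eqref{mi} in two stages: first reduce the problem from arbitrary connected graphs to trees, and then ``straighten'' a tree into a path by local moves. For the reduction, let $T$ be any spanning tree of $G$. Since every path of $T$ is also a path of $G$, we have $d_G(i,j)\le d_T(i,j)$ for all $i,j$, and because $A$ is nonnegative this gives $F_A(G)\le F_A(T)$. Hence it suffices to establish~\eqref{mi} when $G$ is itself a tree, which I henceforth call $T$.

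To organize the tree case I would use the cut reformulation. Expanding $d_T(i,j)$ as the number of edges on the $i$--$j$ path gives $F_A(T)=\sum_{e}c(S_e)$, where deleting an edge $e$ splits $T$ into two parts, $S_e$ denotes one of them, and $c(S)=\sum_{i\in S,\,j\notin S}a_{i,j}$ is the (nonnegative, submodular) cut weight of $S$. For a path with vertex order $v_1,\dots,v_n$ the same expansion yields $F_A(P)=\sum_{k=1}^{n-1}c(\{v_1,\dots,v_k\})$, a sum of cut weights over the complete chain of prefix sets. Thus the target reduces to finding a vertex ordering whose prefix-cut sum is at least the edge-cut sum of $T$. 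It is worth stressing that the path must be allowed to depend on $A$: in general no single ordering can dominate all the distances $d_T(i,j)$ simultaneously (already the star admits no such ordering), and since $F_A(T)$ is linear in $A$ while $\max_\sigma F_A(P_\sigma)$ is convex, the inequality cannot be reduced to checking the rank-one matrices $A=E_{i,j}$ — convexity points the wrong way, so a genuine construction is needed.

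The straightening itself I would run as an induction that, at each step, strictly lowers a ``non-pathness'' measure such as $\sum_v(\deg_T v-2)^+$ while not decreasing $F_A$. If $T$ is not a path it has a vertex $w$ of degree at least $3$; fix a longest (diameter) path of $T$ and consider detaching a pendant branch at $w$ and reattaching it so as to extend $T$ toward an extremity of that diameter. The two ends of the diameter give two admissible moves, and I would use the nonnegativity of $A$ together with the fact that the diameter bounds the depth of the moved branch to show that at least one of the two moves does not decrease $F_A$; iterating terminates at a path. The strict inequality asserted under the extra hypothesis (each row of $A$ having at most one off-diagonal zero) should then follow by checking that, so long as $T$ is not a path, at least one applicable move strictly increases $F_A$.

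The hard part will be this transformation lemma. The obstruction is that, for general nonnegative $A$, any straightening move raises some distances and lowers others, so the choice of move and of direction must be dictated by $A$. This is not a formality: the naive ``merge the two pendant legs at $w$'' move provably fails, since if all the weight sits on the far endpoints $x_p,y_q$ of the two legs then merging shortens the distance between them in \emph{both} directions and strictly decreases $F_A$. This example shows that the correct move must preserve the long ``diagonal'' distances and instead push branches to the extremities of the diameter. Making this quantitative — computing the distance changes and verifying, via a sign or averaging argument applied to the \emph{correct} move, that one admissible direction never decreases $F_A$ — is where essentially all the technical content lies.
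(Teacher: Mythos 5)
You correctly reduce to trees exactly as the paper does, and your cut reformulation of $F_A$ is sound; but the proof stops precisely where the theorem lives. The transformation lemma --- that for a pendant branch $B$ at a vertex $v_m$ of a host path $v_0,\dots,v_d$, at least one of the two reattachments (root of $B$ joined to $v_0$ or to $v_d$) does not decrease $F_A$ --- is asserted and deferred (``where essentially all the technical content lies''), not proved. This is a genuine gap, and it cannot be closed by the unweighted averaging you allude to. Indeed, for $p\in B$ and $q\notin B$ the change of $d(p,q)$ under either move does not depend on $p$, so writing $b_q=\sum_{p\in B}a_{p,q}$ and letting $j(q)$ be the index of the path vertex nearest to $q$, the total changes under the two moves are
\begin{align*}
\Delta_x&=\sum_{q\notin B}b_q\bigl(j(q)-|m-j(q)|\bigr), &
\Delta_y&=\sum_{q\notin B}b_q\bigl(d-j(q)-|m-j(q)|\bigr).
\end{align*}
If all the weight $b_q$ sits at $q=v_d$ and $m<d/2$ (realizable: a single-vertex branch at $v_1$ on a long path, with $A$ supported on pairs joining $B$ to $v_d$), then $\Delta_x+\Delta_y=2m-d<0$; the plain sum of the two moves can be negative, so ``at least one move is good'' cannot be proven by averaging the two directions with equal weights.

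The lemma is nevertheless true, so your plan can be completed, but only with the correct weights: the combination $(d-m)\Delta_x+m\Delta_y$ assigns to each $q$ the coefficient $2m\bigl(d-j(q)\bigr)\ge 0$ when $j(q)\ge m$, and $2j(q)(d-m)\ge 0$ when $j(q)\le m$; hence $(d-m)\Delta_x+m\Delta_y\ge 0$, and since $0<m<d$, at least one of $\Delta_x,\Delta_y$ is nonnegative. (Note that the diameter property plays no role in this inequality; you need it only to guarantee that $v_0$ and $v_d$ are leaves, so that your non-pathness measure $\sum_v(\deg v-2)^+$ strictly decreases and the iteration terminates.) For comparison, the paper sidesteps branch moves altogether: it inducts on $n$ by deleting a leaf $n$ with neighbor $k$ and folding row $n$ of $A$ into row $k$ (setting $a'_{k,j}=a_{k,j}+a_{n,j}$), so that the induction hypothesis straightens $G-n$ into a path and the only move ever required is that of a single pendant vertex to one of the two ends of that path --- a special case of your lemma, which the paper verifies by a fully written-out two-sided comparison of exactly the kind displayed above. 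As submitted, your text is a plausible strategy whose decisive inequality is missing.
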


It is not hard to find nonnegative symmetric matrices $A$ for which
$F_{A}\left(  G\right)  $ is maximized also by graphs other than paths. Thus,
it is natural to attempt to characterize all symmetric, nonnegative matrices
$A,$ for which $F_{A}\left(  G\right)  $ is maximal only if $G$ is a path. The
complete solution of this problem seems difficult, so we shall give only a
partial solution, sufficient for our goals.

\begin{theorem}
\label{ths}Let $G$ be a connected graph of order $n$ and let $A=\left[
a_{i,j}\right]  $ be a symmetric nonnegative matrix of order $n.$ If each row
of $A$ has at most one zero off-diagonal entry, and $G$ is not a path, then
there is a path $P$ with $V\left(  P\right)  =V\left(  G\right)  $\ such that%
\begin{equation}
\sum_{1\leq i<j\leq n}d_{G}\left(  i,j\right)  a_{i,j}<\sum_{1\leq i<j\leq
n}d_{P}\left(  i,j\right)  a_{i,j}. \label{mip}%
\end{equation}

\end{theorem}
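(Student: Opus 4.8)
The plan is to reduce Theorem \ref{ths} to a single strict improvement and then invoke Theorem \ref{thg}. It suffices to prove that whenever $G$ is a connected non-path graph on $V=\{1,\dots,n\}$ there is a connected graph $G'$ on the same vertex set with $F_A(G)<F_A(G')$; applying Theorem \ref{thg} to $G'$ then yields a path $P$ with $V(P)=V$ and $F_A(G)<F_A(G')\le F_A(P)$, which is (\ref{mip}). Throughout I would use the hypothesis in the following form: since each row of $A$ has at most one zero off-diagonal entry, the set $Z=\{\{i,j\}:a_{i,j}=0,\ i\ne j\}$ is a partial matching, so every vertex belongs to at most one pair of $Z$.

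First I would dispose of the case where $G$ is not a tree. Then $G$ contains a cycle $C$ of length at least $3$, and the edges of $C$ cannot all lie in the matching $Z$, since two consecutive edges of $C$ share a vertex. Hence some edge $uv$ of $C$ has $a_{u,v}>0$. Deleting $uv$ leaves $G$ connected, does not decrease any distance, and raises $d_G(u,v)$ from $1$ to at least $2$; therefore $F_A(G)<F_A(G-uv)$ and I may take $G'=G-uv$.

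The heart of the argument is the case where $G=T$ is a tree that is not a path. For a leaf $z$ with neighbour $u$ and any vertex $w\ne z$, let $T_{z\to w}$ be obtained from $T$ by deleting the edge $uz$ and adding the edge $wz$; this is again a tree on $V$, and only the distances from $z$ change, so that
\[
F_A(T_{z\to w})-F_A(T)=\sum_{y\ne z}a_{z,y}\bigl(d_T(w,y)-d_T(u,y)\bigr)=g_z(w)-g_z(u),
\]
where $g_z(w):=\sum_{y\ne z}a_{z,y}\,d_T(w,y)$ is the $z$-weighted transmission of $w$. Thus it is enough to exhibit one leaf $z$ whose neighbour $u$ fails to maximize $g_z$ over $w\in V\setminus\{z\}$: for any better vertex $w$ we then set $G'=T_{z\to w}$. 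I would obtain such a leaf from the contrapositive of the following key lemma: if, for every leaf $z$ of $T$, the neighbour of $z$ maximizes $g_z$, then $T$ is a path. Granting this lemma, a non-path tree must possess a leaf whose neighbour is strictly beaten, completing the tree case.

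To prove the key lemma I would fix a longest path $W=w_0w_1\cdots w_\ell$ of $T$ and suppose $T$ is not a path, so some internal vertex $w_j$ has a neighbour off $W$ and hence carries a branch containing a leaf $z\notin W$; the maximality of $W$ forces the depth bound $d_T(z,w_j)\le\min(j,\ell-j)$. Writing $u$ for the neighbour of $z$, I would compare $g_z(u)$ with $g_z(w_0)$ and $g_z(w_\ell)$ and show, using the depth bound, that relocating $z$ to the farther endpoint increases its distance to enough vertices that the unweighted surplus is strictly positive; the matching hypothesis then guarantees that the at most one zero entry in row $z$ cannot absorb this surplus, so at least one of the two endpoint relocations strictly increases $g_z$, contradicting maximality at $u$. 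I expect this last estimate to be the main obstacle: one must control the vertices lying near the chosen endpoint, whose distance to $z$ \emph{decreases} under the move, and verify that they are outweighed by the bulk of the tree—precisely the point at which the restriction of at most one zero off-diagonal entry per row is indispensable, since a single vanishing weight can never cancel a surplus spread over several pairs.
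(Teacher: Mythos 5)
Your reduction scheme is sound: a single strict improvement $F_A(G)<F_A(G')$ over connected graphs on $V(G)$, followed by Theorem \ref{thg} applied to $G'$, does yield (\ref{mip}); and your non-tree case is correct (indeed cleaner than the paper's detour through unicyclic spanning subgraphs and a separate cycle argument). The gap is in the tree case: the key lemma on which everything rests is false, so the estimate you flagged as "the main obstacle" cannot be carried out. Let $T$ be the spider with center $c$ and three legs $x_1x_2$, $y_1y_2$, $z_1z_2$ of length two, so $n=7$ and the leaves are $x_2,y_2,z_2$. Define $A$ by $a_{x_1,x_2}=a_{y_1,y_2}=a_{z_1,z_2}=0$, by $a_{c,x_2}=a_{c,y_2}=a_{c,z_2}=1$, by $a_{i,j}=1$ for the pairs inside $\{c,x_1,y_1,z_1\}$, and by $a_{i,j}=10$ for all remaining pairs. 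Then $A$ is symmetric and nonnegative, and its zero pairs form a matching, so each row has exactly one zero off-diagonal entry and $A$ satisfies the hypothesis of Theorem \ref{ths}. The row of $z_2$ gives weight $0$ to $z_1$, weight $1$ to $c$, and weight $10$ to each of $x_1,x_2,y_1,y_2$, whence
\begin{align*}
g_{z_2}(z_1)&=1+10(2+3+2+3)=101, & g_{z_2}(x_2)&=g_{z_2}(y_2)=2+10\cdot 8=82,\\
g_{z_2}(x_1)&=g_{z_2}(y_1)=1+10\cdot 6=61, & g_{z_2}(c)&=10\cdot 6=60.
\end{align*}
So the neighbour $z_1$ is the \emph{strict} maximizer of $g_{z_2}$ over all of $V\setminus\{z_2\}$, and by the threefold symmetry of the construction the same holds for the leaves $x_2$ and $y_2$. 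Thus $T$ is not a path, yet no single-leaf relocation increases $F_A$: your lemma fails, and with it your entire tree case. (Theorem \ref{ths} is of course still true for this pair: for instance the path $x_2x_1cy_1y_2z_1z_2$ has $F_A=341>315=F_A(T)$; but that path cannot be reached from $T$ by moving one leaf.)

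The failure is structural, not a matter of sharpening the estimate. In your longest-path setup, with $z$ at depth $p=d_T(z,w_j)$ off $W$, a positive combination $\alpha\left(g_z(w_0)-g_z(u)\right)+\beta\left(g_z(w_\ell)-g_z(u)\right)$ can have every vertex contribute nonnegatively only if the constraints coming from $y=w_0$ and $y=w_\ell$, namely $\beta(\ell-j-p+1)\geq\alpha(j+p-1)$ and $\alpha(j-p+1)\geq\beta(\ell-j+p-1)$, are compatible; multiplying them out forces $2(p-1)\ell\leq 0$, i.e.\ $p=1$. The spider above is exactly the smallest configuration with $p=2$. This is also precisely what the paper's induction is engineered to avoid: it never relocates a leaf of the original tree under the original matrix. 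Instead it deletes the leaf $n$ and folds row $n$ into the row of its neighbour $k$, producing the merged matrix $A^{\prime}\in N(n-1)$; the induction hypothesis applied to $A^{\prime}$ then forces $G-n$ to be a path, so the only leaf moves ever needed are moves of a leaf at depth $1$ off a path --- the single case where the two-endpoint combination works. To repair your proof you would need an analogue of this matrix surgery, or moves that displace more than one vertex; single-leaf relocations with the fixed matrix $A$ cannot prove Theorem \ref{ths}, as the example shows.
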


As yet we know of no application that exploits the full strength of Theorem
\ref{ths}. Indeed, to prove Conjectures \ref{con1} and \ref{con2}, and Theorem
\ref{thSI}, we shall use only the following simple corollary.

\begin{corollary}
\label{cor1} Let $G$ be a connected graph of order $n$ and let $A=\left[
a_{i,j}\right]  $ be a symmetric matrix of order $n.$ If each off-diagonal
entry of $A$ is positive, and $G$ is not a path, then there is a path $P$ with
$V\left(  P\right)  =V\left(  G\right)  $\ such that $F_{A}\left(  P\right)
>F_{A}\left(  G\right)  .$
\end{corollary}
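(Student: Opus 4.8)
The plan is to derive Corollary \ref{cor1} directly from Theorem \ref{ths}, since the hypotheses of the corollary are strictly stronger than those of the theorem. First I would observe that if every off-diagonal entry of $A$ is positive, then in particular no off-diagonal entry equals zero, so each row of $A$ has \emph{zero} off-diagonal zero entries, which is trivially at most one. Thus the hypothesis of Theorem \ref{ths} is satisfied. Since $G$ is assumed not to be a path, Theorem \ref{ths} applies verbatim and yields a path $P$ with $V(P)=V(G)$ satisfying the strict inequality \eqref{mip}.

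Next I would translate the conclusion of Theorem \ref{ths} back into the notation $F_A$. By the definition of $F_A$ given in the paper, the left-hand side of \eqref{mip} is exactly $F_A(G)$ and the right-hand side is exactly $F_A(P)$. Hence \eqref{mip} reads $F_A(G)<F_A(P)$, which is precisely the assertion $F_A(P)>F_A(G)$ claimed by the corollary. This completes the argument.

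Because the corollary is essentially an immediate specialization, there is no genuine obstacle to overcome here; the only thing to verify carefully is the logical direction of the hypothesis weakening, namely that ``every off-diagonal entry positive'' implies ``at most one off-diagonal zero per row.'' This is clearly correct, since having no off-diagonal zeros is a special case of having at most one. I would therefore expect the proof to be a single short paragraph invoking Theorem \ref{ths}, with the main (and minor) point being to make explicit that positivity of all off-diagonal entries forces nonnegativity of $A$ as well, so that the nonnegativity hypothesis of Theorem \ref{ths} is also met.
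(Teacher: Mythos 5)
Your proposal is correct and takes exactly the route the paper intends: Corollary \ref{cor1} is an immediate specialization of Theorem \ref{ths}, since ``all off-diagonal entries positive'' trivially implies ``at most one zero off-diagonal entry per row.'' One small correction to your final remark: positivity of the off-diagonal entries does \emph{not} force $A$ to be nonnegative (the diagonal could be negative), but as the paper observes the diagonal is irrelevant to $F_{A}$, so one may replace it by zeros before invoking Theorem \ref{ths}.
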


\subsection{Proofs of Conjectures \ref{con1} and \ref{con2}, and Theorem
\ref{thSI}}

We proceed with the proof of Conjecture \ref{con2}. Let $G$ be a connected
graph of order $n$ for which $\lambda\left(  D^{Q}\left(  G\right)  \right)  $
is maximal within all connected graphs of order $n$. We shall prove that $G$
is a path. Let $\mathbf{x}=\left(  x_{1},\ldots,x_{n}\right)  $ be a unit
eigenvector to $\lambda\left(  D^{Q}\left(  G\right)  \right)  .$ Since
$D^{Q}\left(  G\right)  $ is irreducible, the vector $\mathbf{x}$ is positive.
Define an $n\times n$ matrix $A=\left[  a_{i,j}\right]  $ by letting
$a_{i,j}=\left(  x_{i}+x_{j}\right)  ^{2}.$ Clearly $A$ is symmetric and
nonnegative. As is well-known,
\[
\lambda\left(  D^{Q}\left(  G\right)  \right)  =\left\langle D^{Q}\left(
G\right)  \mathbf{x},\mathbf{x}\right\rangle =\sum_{1\leq i<j\leq n}%
d_{G}\left(  i,j\right)  \left(  x_{i}+x_{j}\right)  ^{2}=F_{A}\left(
G\right)  .
\]
Since each off-diagonal entry of $A$ is positive, Corollary \ref{cor1} implies
that either $G=P_{n}$ or there is a path $P$ with $V\left(  P\right)
=V\left(  G\right)  $\ such that $F_{A}\left(  P\right)  >F_{A}\left(
G\right)  .$ The latter cannot hold as we would have
\[
\lambda\left(  D^{Q}\left(  G\right)  \right)  =F_{A}\left(  G\right)
<F_{A}\left(  P\right)  \leq\lambda\left(  D^{Q}\left(  P\right)  \right)  ,
\]
contrary to the choice of $G.$ Hence $G=P_{n},$ completing the proof of
Conjecture \ref{con2}.

Theorem \ref{thSI} can be proved in the same way, with $A=\left[
a_{i,j}\right]  $ defined by $a_{i,j}=x_{i}x_{j}.$ However, Conjecture
\ref{con1} requires a slightly more careful approach.

Let $G$ be a connected graph of order $n$ such that $\lambda\left(
D^{L}\left(  G\right)  \right)  $ is maximal among all connected $n$ vertex
graphs. We shall prove that $G$ must be a path. Let $\mathbf{x}=\left(
x_{1},\ldots,x_{n}\right)  $ be a unit eigenvector to $\lambda\left(
D^{L}\left(  G\right)  \right)  $ and define an $n\times n$ matrix $A=\left[
a_{i,j}\right]  $ by letting $a_{i,j}=\left(  x_{i}-x_{j}\right)  ^{2}.$
Clearly $A$ is symmetric and nonnegative. Also, it is well-known that
\[
\lambda\left(  D^{L}\left(  G\right)  \right)  =\left\langle D^{L}\left(
G\right)  \mathbf{x},\mathbf{x}\right\rangle =\sum_{1\leq i<j\leq n}%
d_{G}\left(  i,j\right)  \left(  x_{i}-x_{j}\right)  ^{2}=F_{A}\left(
G\right)  .
\]
However, at this stage we cannot rule out that $A$ has numerous zero entries,
and so Corollary \ref{cor1} does not apply as before. Yet Theorem \ref{thg}
implies that there is a path $P$ with $V\left(  P\right)  =V\left(  G\right)
$\ such that $F_{A}\left(  P\right)  \geq F_{A}\left(  G\right)  ;$ hence,%
\[
\lambda\left(  D^{L}\left(  G\right)  \right)  =F_{A}\left(  G\right)  \leq
F_{A}\left(  P\right)  \leq\lambda\left(  D^{L}\left(  P\right)  \right)  .
\]
Due to the choice of $G,$ equalities should hold throughout the above line,
implying that $\mathbf{x}$ is an eigenvector to $P_{n}.$ But in Theorems. 4.4
and 4.6 of \cite{NaPa14} Nath and Paul have established that all entries of an
eigenvector to $\lambda\left(  D^{L}\left(  P\right)  \right)  $ are different
and so the off-diagonal entries of $A$ are positive. Now we apply Corollary
\ref{cor1} and finish the proof as for Conjecture \ref{con2}.

\section{Proofs of the main theorems}

For graph notation undefined here we refer the reader to \cite{Bol98}. For
general properties of the distance Laplacian and the distance signless
Laplacian the reader is referred to \cite{AuHa11,AuHa13a,AuHa13}.

Here is some notation that will be used later in the proofs:

- $P_{n}$ and $C_{n}$ stand for the path and cycle of order $n;$

- $G-u$ denotes the graph obtained from $G$ by removing the vertex $u;$

- $G-\left\{  u,v\right\}  $ denotes the graph obtained from $G$ by removing
the vertices $u$ and $v.\medskip$

We shall assume that any graph of order $n$ is defined on the vertex set
$\left[  n\right]  =\left\{  1,\ldots,n\right\}  .\medskip$

The proofs of Theorems \ref{thg} and \ref{ths} have the same general
structure, but the latter requires a lot of extra details so it will be
presented separately. \medskip

\begin{proof}
[\textbf{Proof of Theorem \ref{thg}}]Note first that if $H$ is a spanning tree
of $G,$ then $d_{G}\left(  i,j\right)  \leq d_{H}\left(  i,j\right)  $ for
every $i,j\in V\left(  G\right)  ;$ hence
\[
\sum_{1\leq i<j\leq n}d_{G}\left(  i,j\right)  a_{i,j}\leq\sum_{1\leq i<j\leq
n}d_{H}\left(  i,j\right)  a_{i,j}.
\]
Therefore, we may and shall assume that $G$ is a tree itself. We carry out the
proof by induction on $n.$ If $n\leq3,$ every tree of order $n$ is a path, so
there is nothing to prove in this case. Assume now that $n>3$ and the
assertion holds for any $n^{\prime}$ such that $n^{\prime}<n.$ Choose a vertex
$u\in V\left(  G\right)  $ of degree $1.$ By symmetry, we assume that $u=n,$
and let $k$ be the single neighbor of $u;$ hence $G-n$ is a tree of order
$n-1.$

Define a symmetric matrix $A^{\prime}=\left[  a_{ij}^{\prime}\right]  $ of
order $n-1$ as follows:%
\[
a_{i,j}^{\prime}=\left\{
\begin{array}
[c]{ll}%
a_{i,j}, & \text{if\textbf{ }}i\neq k\text{ and }j\neq k;\text{\textbf{ } }\\
a_{k,j}+a_{n,j}, & \text{if\textbf{ }}i=k;\\
a_{i,k}+a_{i,n}, & \text{if\textbf{ }}j=k.
\end{array}
\right.
\]
Clearly $A^{\prime}$ is a symmetric nonnegative matrix. By the induction
assumption there is a path $P^{\prime}$ with $V\left(  P^{\prime}\right)
=V\left(  G-n\right)  =\left[  n-1\right]  $ such that
\begin{equation}
\sum_{1\leq i<j<n}d_{G-n}\left(  i,j\right)  a_{i,j}^{\prime}\leq\sum_{1\leq
i<j<n}d_{P^{\prime}}\left(  i,j\right)  a_{i,j}^{\prime}. \label{in1}%
\end{equation}
On the other hand, for each $j\in V\left(  G-n\right)  ,$ the shortest path
between $n$ and $j$ contains $k,$ so%
\[
d_{G}\left(  j,n\right)  =d_{G-n}\left(  j,k\right)  +1.
\]
Hence we see that
\begin{align*}
\sum_{1\leq i<j\leq n}d_{G}\left(  i,j\right)  a_{i,j}  &  =\sum_{j=1}%
^{n-1}d_{G}\left(  j,n\right)  a_{j,n}+\sum_{1\leq i<j<n}d_{G-n}\left(
i,j\right)  a_{i,j}\\
&  =\sum_{j=1}^{n-1}\left(  d_{G-n}\left(  k,j\right)  +1\right)  a_{j,n}%
+\sum_{1\leq i<j<n}d_{G-n}\left(  i,j\right)  a_{i,j}\\
&  =\sum_{j=1}^{n-1}a_{n,j}+\sum_{1\leq i<j<n}d_{G-n}\left(  i,j\right)
a_{i,j}^{\prime}.
\end{align*}
Now, (\ref{in1}) implies that
\begin{equation}
\sum_{1\leq i<j\leq n}d_{G}\left(  i,j\right)  a_{i,j}\leq\sum_{j=1}%
^{n-1}a_{j,n}+\sum_{1\leq i<j<n}d_{P^{\prime}}\left(  i,j\right)
a_{i,j}^{\prime}. \label{in2}%
\end{equation}
Further, write $T$ for the tree obtained form the path $P^{\prime}$ by joining
$n$ to the vertex $k\in V\left(  P^{\prime}\right)  $. As before, we see that
\begin{align*}
\sum_{1\leq i<j\leq n}d_{T}\left(  i,j\right)  a_{i,j}  &  =\sum_{j=1}%
^{n-1}d_{T}\left(  j,n\right)  a_{j,n}+\sum_{1\leq i<j<n}d_{T-n}\left(
i,j\right)  a_{i,j}\\
&  =\sum_{j=1}^{n-1}\left(  d_{P^{\prime}}\left(  j,k\right)  +1\right)
a_{j,n}+\sum_{1\leq i<j<n}d_{P^{\prime}}\left(  i,j\right)  a_{i,j}\\
&  =\sum_{j=1}^{n-1}a_{j,n}+\sum_{1\leq i<j<n}d_{P^{\prime}}\left(
i,j\right)  a_{i,j}^{\prime}.
\end{align*}
Hence, (\ref{in2}) implies that%
\[
\sum_{1\leq i<j\leq n}d_{G}\left(  i,j\right)  a_{i,j}\leq\sum_{1\leq i<j\leq
n}d_{T}\left(  i,j\right)  a_{i,j}.
\]
If $T=P_{n}$, there is nothing to prove, so suppose that $T\neq P_{n}.$ To
complete the proof we shall show that we can join $n$ to one of the ends of
$P^{\prime}$ so that $F_{A}\left(  T\right)  $ will not decrease.

By symmetry, assume that the vertex sequence of the path $P^{\prime}$ is
precisely $1,2,\ldots,n-1;$ thus the neighbor $k$ of $n$ satisfies $1<k<n-1.$
Write $A_{0}$ for the principal submatrix of $A$ in the first $n-1$ rows and
note that%
\[
F_{A}\left(  T\right)  =\sum_{i=1}^{k}\left(  k-i+1\right)  a_{i,n}%
+\sum_{i=k+1}^{n-1}\left(  i-k+1\right)  a_{i,n}+F_{A_{0}}\left(  P^{\prime
}\right)  .
\]
Next, delete the edge $\left\{  n,k\right\}  $ in $T,$ add the edge $\left\{
n,1\right\}  ,$ and write $T_{1}$ for the resulting path. If $F_{A}\left(
T_{1}\right)  >F_{A}\left(  T\right)  ,$ the proof is completed, so let us
assume that $F_{A}\left(  T_{1}\right)  \leq F_{A}\left(  T\right)  .$ Since
\[
F_{A}\left(  T_{1}\right)  =\sum_{i=1}^{n-1}ia_{i,n}+F_{A_{0}}\left(
P^{\prime}\right)  ,
\]
we see that,
\[
\sum_{i=1}^{k-1}\left(  k-i+1\right)  a_{i,n}+\sum_{i=k}^{n-1}\left(
i-k+1\right)  a_{i,n}\geq\sum_{i=1}^{n-1}ia_{i,n}%
\]
and so%
\[
\sum_{i=1}^{k-1}\left(  k-2i+1\right)  a_{i,n}\geq\left(  k-1\right)  \left(
a_{k,n}+\cdots+a_{n-1,n}\right)  .
\]
Hence,%
\begin{equation}
\left(  k-1\right)  \left(  a_{1,n}+\cdots+a_{k-1,n}\right)  \geq\left(
k-1\right)  \left(  a_{k,n}+\cdots+a_{n-1,n}\right)  . \label{in3}%
\end{equation}

Now, delete the edge $\left\{  n,k\right\}  $ in $T,$ add the edge $\left\{
n,n-1\right\}  ,$ and write $T_{2}$ for the resulting path. If $F_{A}\left(
T_{2}\right)  >F_{A}\left(  T\right)  ,$ the proof is completed, so let us
assume that $F_{A}\left(  T_{2}\right)  \leq F_{A}\left(  T\right)  .$ Since
\[
F_{A}\left(  T_{2}\right)  =\sum_{i=1}^{n-1}\left(  n-i\right)  a_{i,n}%
+F_{A_{0}}\left(  P^{\prime}\right)  ,
\]
we see that
\[
\sum_{i=1}^{k-1}\left(  k-i+1\right)  a_{i,n}+\sum_{i=k}^{n-1}\left(
i-k+1\right)  a_{i,n}\geq\sum_{i=1}^{n-1}\left(  n-i\right)  a_{i,n},
\]
and so%
\[
\sum_{i=k}^{n-1}\left(  2i-k-n+1\right)  a_{i,n}\geq\left(  n-k-1\right)
\left(  a_{1,n}+\cdots+a_{k-1,n}\right)  .
\]
Hence,
\[
\left(  n-k-1\right)  \left(  a_{k,n}+\cdots+a_{n-1,n}\right)  \geq\left(
n-k-1\right)  \left(  a_{1,n}+\cdots+a_{k-1,n}\right)  .
\]
This inequality, together with (\ref{in3}), implies that
\[
a_{k,n}+\cdots+a_{n-1,n}=a_{1,n}+\cdots+a_{k-1,n},
\]
and that $F_{A}\left(  T_{1}\right)  =F_{A}\left(  T\right)  $ and
$F_{A}\left(  T_{2}\right)  =F_{A}\left(  T\right)  .$ This completes the
induction step and the proof of Theorem \ref{thg}.
\end{proof}

\subsection{Proof of Theorem \ref{ths}}

Most of the proof of Theorem \ref{ths} deals with the case of $G$ being a
tree, so we extract this part in Theorem \ref{tht} below. The general case
will be deduced later by different means.

For convenience write $N\left(  n\right)  $ for the class of all symmetric
nonnegative matrix of order $n$ such that each row of $A$ has at most one zero
off-diagonal entry.

\begin{theorem}
\label{tht}Let $G$ be a tree of order $n.$ If $A\in N\left(  n\right)  $ and
$G\neq P_{n},$ then there exists a path $P$ with $V\left(  P\right)  =V\left(
G\right)  $\ such that $F_{A}\left(  G\right)  <F_{A}\left(  P\right)  .$
\end{theorem}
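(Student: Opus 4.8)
The plan is to mimic the inductive structure of the proof of Theorem~\ref{thg}, but to track the strictness of the inequalities carefully, using the hypothesis $A\in N(n)$ to force strict gain somewhere. First I would reduce to the situation already analyzed: pick a leaf $u=n$ with neighbor $k$, form the contracted matrix $A'$ of order $n-1$ exactly as before, and build the tree $T$ by attaching $n$ to the copy of $k$ in the path $P'$ obtained from the induction. The chain of inequalities from Theorem~\ref{thg} already gives $F_A(G)\le F_A(T)$, and attaching $n$ to an endpoint of $P'$ (via $T_1$ or $T_2$) yields a genuine path $P$ with $F_A(T)\le F_A(P)$. So the entire question reduces to locating one strict inequality along this chain.

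The main obstacle is that $A'$ need not lie in $N(n-1)$: merging rows $k$ and $n$ by addition can create extra off-diagonal zeros only if the corresponding entries were simultaneously zero, but more delicately, the row/column indexed by $k$ in $A'$ is a sum, so its zero pattern is governed by $a_{k,j}+a_{n,j}$, and a single zero in the merged row requires $a_{k,j}=a_{n,j}=0$. I would first verify the key structural claim: if $A\in N(n)$, then $A'\in N(n-1)$. This should follow because for a fixed $j\neq k$ the merged entry $a_{k,j}+a_{n,j}$ vanishes only when both summands vanish, and since row $j$ of the original $A$ has at most one off-diagonal zero, at most one such $j$ can have $a_{j,k}=0$, and likewise the constraint on rows $k$ and $n$ limits the number of zeros they contribute. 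Establishing that $A'$ inherits the "at most one zero per row" property cleanly is the crux; if it holds, the induction hypothesis (Theorem~\ref{tht} for $n-1$) applies and, when $P'\neq$ the trivial case, already delivers strictness, which propagates up.

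If instead $G-n$ is itself a path $P'$ (so the induction gives only the non-strict base case), the strictness must come from the leaf-attachment step. Here I would examine inequality~(\ref{in3}) and its companion from the $T_2$ analysis. In Theorem~\ref{thg} these combined to force the equality $a_{1,n}+\cdots+a_{k-1,n}=a_{k,n}+\cdots+a_{n-1,n}$ together with $F_A(T_1)=F_A(T_2)=F_A(T)$. The plan is to show that under $A\in N(n)$ this forced equality is impossible when $1<k<n-1$, i.e.\ when $n$ is attached to an interior vertex. Intuitively, row $n$ has at most one off-diagonal zero, so the weights $a_{i,n}$ are "mostly positive", and the strict convexity of the coefficient pattern (the coefficients $k-i+1$ and $i-k+1$ versus $i$ and $n-i$) cannot be balanced simultaneously on both sides unless the weights collapse in a way the zero-pattern restriction forbids. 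I expect this convexity-versus-support argument to be the genuinely delicate part, since one must rule out the degenerate equality case using only the single-zero-per-row hypothesis rather than full positivity.

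Finally I would note that the case analysis splits according to whether $k$ is interior or already an endpoint; if $k$ is an endpoint then $T$ is already a path and one reduces purely to the induction, whereas the interior case is where the strict inequality is extracted. Assembling these pieces—verifying $A'\in N(n-1)$, handling the base of the induction, and forcing strictness in the interior-attachment step—completes the argument, with the second and third of these being where essentially all the real work lies.
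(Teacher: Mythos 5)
Your plan is, in outline, the paper's own proof: induction on $n$, verification that the contracted matrix satisfies $A'\in N(n-1)$, strict propagation through the contraction when $G-n$ is not a path, and extraction of strictness from the two end-relocations of the leaf when $G-n$ is a path and $k$ is interior. The key structural claim you isolate is correct for exactly the reason you give (a zero merged entry $a_{k,j}+a_{n,j}$ would force two zeros in row $j$ of $A$, and the zeros of the merged row $k$ sit inside the zeros of row $k$ of $A$), and for $n\ge 5$ your ``convexity-versus-support'' step is precisely the paper's computation: if both $F_A(G_1)\le F_A(G)$ and $F_A(G_2)\le F_A(G)$, then besides $a_{1,n}+\cdots+a_{k-1,n}=a_{k,n}+\cdots+a_{n-1,n}$ one gets the vanishing of the defect sums $S_1=-2\sum_{i=1}^{k-1}(i-1)a_{i,n}$ and $S_2=-2\sum_{i=k}^{n-1}(n-i-1)a_{i,n}$, hence $a_{2,n}=\cdots=a_{n-2,n}=0$, which is $n-3\ge 2$ zeros in row $n$, contradicting $A\in N(n)$.

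There is, however, one concrete step that fails: $n=4$. There your claim that the forced equality is impossible for interior $k$ is false, because the argument above yields only $n-3=1$ zero, which is compatible with $A\in N(4)$. Explicitly, let $G$ be the star with center $2$ and let $a_{2,4}=0$ with all other off-diagonal entries equal to $1$; then $A\in N(4)$, and relocating the leaf $4$ to either end of the path $1,2,3$ gives $F_A(G_1)=F_A(G_2)=F_A(G)=8$, so no strict gain comes from moving that particular leaf. (A strictly better path does exist, e.g.\ the path $1,4,2,3$ has value $9$, but it is not of the form your single-leaf relocation produces.) Since your induction step only ever produces $n-3$ zeros, it breaks down at its very first application, and the induction never starts. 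The paper anticipates exactly this and gives a separate direct proof for $n=4$: for the star one runs the two end-relocation comparisons at \emph{each} of the three leaves, which forces $a_{4,2}=a_{1,2}=a_{3,2}=0$, i.e.\ three zeros in the center's row, and this does contradict $A\in N(4)$. So the missing idea is not the routine base $n\le 3$ you allude to, but a genuinely different, multi-leaf argument at $n=4$; once that case is added, the rest of your outline goes through as in the paper.
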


\begin{proof}
Our proof is by induction on $n$ and is structured as the proof of Theorem
\ref{thg}. If $n\leq3,$ every tree of order $n$ is a path, so there is nothing
to prove in this case. For technical reason we would like to give a direct
proof for $n=4$ as well. There are two trees of order $4$ - a path and a star.
Assume that $G$ is a star, and by symmetry suppose that $2$ is its center. We
have
\[
F_{A}\left(  G\right)  =2a_{4,1}+a_{4,2}+2a_{4,3}+a_{1,2}+a_{2,3}+2a_{1,3}.
\]
Remove the edge $\left\{  4,2\right\}  $ and add the edge $\left\{
4,1\right\}  ,$ thus obtaining a path $G_{1}.$ Assume for a contradiction that
$F_{A}\left(  G\right)  \geq F_{A}\left(  G_{1}\right)  ,$ which implies that
$a_{4,1}\geq a_{4,2}+a_{4,3}.$ Now, remove from $G$ the edge $\left\{
4,2\right\}  $ and add the edge $\left\{  4,3\right\}  ,$ thus obtaining a
path $G_{2}.$ Assume for a contradiction that $F_{A}\left(  G\right)  \geq
F_{A}\left(  G_{2}\right)  ,$ which implies that $a_{4,3}\geq a_{4,2}%
+a_{4,1}.$ We conclude that $a_{4,2}=0.$ By symmetry, we also get $a_{1,2}=0$
and $a_{3,2}=0;$ hence $A$ has a zero row, contradicting the hypothesis. Thus,
$G$ is a path.

Assume now that $n\geq5$ and the assertion of Theorem holds for any
$n^{\prime}$ such that $n^{\prime}<n.$ Let $G$ be tree for which $F_{A}\left(
G\right)  $ attains a maximum. We shall prove that $G=P_{n}.$ Choose a vertex
$u\in V\left(  G\right)  $ of degree $1.$ By symmetry, we assume that $u=n,$
and let $k$ be the single neighbor of $u$; hence $G-n$ is a tree of order
$n-1.$

Define a symmetric matrix $A^{\prime}=\left[  a_{ij}^{\prime}\right]  $ of
order $n-1$ as follows%
\[
a_{i,j}^{\prime}=\left\{
\begin{array}
[c]{ll}%
a_{i,j}, & \text{if\textbf{ }}i\neq k\text{ and }j\neq k;\text{\textbf{ } }\\
a_{k,j}+a_{n,j}, & \text{if\textbf{ }}i=k;\\
a_{i,k}+a_{i,n}, & \text{if\textbf{ }}j=k.
\end{array}
\right.
\]
Clearly $A^{\prime}\in N\left(  n-1\right)  $. Suppose that $G-n\neq P_{n-1}.$
By the induction assumption, there is a path $P^{\prime}$ with $V\left(
P^{\prime}\right)  =V\left(  G-n\right)  =\left[  n-1\right]  $ such that
\[
\sum_{1\leq i<j<n}d_{G-n}\left(  i,j\right)  a_{i,j}^{\prime}<\sum_{1\leq
i<j<n}d_{P^{\prime}}\left(  i,j\right)  a_{i,j}^{\prime}.
\]
Hence, as in the proof of Theorem \ref{thg}, we find that
\[
F_{A}\left(  G\right)  =\sum_{j=1}^{n-1}a_{j,n}+\sum_{1\leq i<j<n}%
d_{G-n}\left(  i,j\right)  a_{i,j}^{\prime}<\sum_{j=1}^{n-1}a_{j,n}%
+\sum_{1\leq i<j<n}d_{P^{\prime}}\left(  i,j\right)  a_{i,j}^{\prime}.
\]
Now, join $n$ to $k,$ and write $T$ for the obtained tree. As before, we see
that
\[
F_{A}\left(  T\right)  =\sum_{j=1}^{n-1}a_{j,n}+\sum_{1\leq i<j<n}%
d_{P^{\prime}}\left(  i,j\right)  a_{i,j}^{\prime}>F_{A}\left(  G\right)  .
\]
This contradicts the assumption that $F_{A}\left(  G\right)  $ is maximal.
Therefore $G-n=P_{n-1}.$

By symmetry, assume that the vertex sequence of the path $G-n$ is precisely
$1,2,\ldots,n-1.$ If $k=1$ or $k=n-1,$ we see that $G=P_{n},$ so let us assume
that $1<k<n-1$. To complete the proof we shall show that we can join $n$ to
$1$ or to $n-1$ so that $F_{A}\left(  G\right)  $ will increase.

Write $A_{0}$ for the principal submatrix of $A$ in the first $n-1$ rows and
note that%
\[
F_{A}\left(  G\right)  =\sum_{i=1}^{k}\left(  k-i+1\right)  a_{i,n}%
+\sum_{i=k+1}^{n-1}\left(  i-k+1\right)  a_{i,n}+F_{A_{0}}\left(  G-n\right)
.
\]
Next, delete the edge $\left\{  n,k\right\}  $ in $G,$ add the edge $\left\{
n,1\right\}  ,$ and write $G_{1}$ for the resulting path. Since $F_{A}\left(
G\right)  $ is maximal, we see that $F_{A}\left(  G_{1}\right)  \leq
F_{A}\left(  G\right)  .$ From
\[
F_{A}\left(  G_{1}\right)  =\sum_{i=1}^{n-1}ia_{i,n}+F_{G-n}\left(
A_{0}\right)
\]
it follows that,
\[
\sum_{i=1}^{k-1}\left(  k-i+1\right)  a_{i,n}+\sum_{i=k}^{n-1}\left(
i-k+1\right)  a_{i,n}\geq\sum_{i=1}^{n-1}ia_{i,n},
\]
and so%
\[
\sum_{i=1}^{k-1}\left(  k-2i+1\right)  a_{i,n}\geq\left(  k-1\right)  \left(
a_{k,n}+\cdots+a_{n-1,n}\right)  .
\]
Hence, letting%
\[
S_{1}=-2\sum_{i=1}^{k-1}\left(  i-1\right)  a_{i,n}%
\]
we see that
\begin{equation}
\left(  k-1\right)  \left(  a_{1,n}+\cdots+a_{k-1,n}\right)  +S_{1}\geq\left(
k-1\right)  \left(  a_{k,n}+\cdots+a_{n-1,n}\right)  \label{in4}%
\end{equation}

Finally, delete the edge $\left\{  n,k\right\}  $ in $G,$ add the edge
$\left\{  n,n-1\right\}  ,$ and write $G_{2}$ for the resulting path. Since
$F_{A}\left(  G\right)  $ is maximal, we see that $F_{A}\left(  G_{2}\right)
\leq F_{A}\left(  G\right)  .$ From
\[
F_{A}\left(  G_{2}\right)  =\sum_{i=1}^{n-1}\left(  n-i\right)  a_{i,n}%
+F_{G-n}\left(  A_{0}\right)
\]
it follows that
\[
\sum_{i=1}^{k}\left(  k-i+1\right)  a_{i,n}+\sum_{i=k+1}^{n-1}\left(
i-k+1\right)  a_{i,n}\geq\sum_{i=1}^{n-1}\left(  n-i\right)  a_{i,n},
\]
and so
\[
\sum_{i=k}^{n-1}\left(  2i-k-n+1\right)  a_{i,n}\geq\left(  n-k-1\right)
\left(  a_{1,n}+\cdots+a_{k-1,n}\right)  .
\]
Hence, letting%
\[
S_{2}=-2\sum_{i=k}^{n-1}\left(  n-i-1\right)  a_{i,n}%
\]%
\[
\left(  n-k-1\right)  \left(  a_{k,n}+\cdots+a_{n-1,n}\right)  +S_{2}%
\geq\left(  n-k-1\right)  \left(  a_{1,n}+\cdots+a_{k-1,n}\right)  .
\]
Comparing this inequality with (\ref{in4}), in view of $S_{1}\leq0$ and
$S_{2}\leq0,$ we find that
\[
a_{k,n}+\cdots+a_{n-1,n}=a_{1,n}+\cdots+a_{k-1,n}\text{ \ \ and \ \ \ }%
S_{1}=S_{2}=0.
\]
Hence,%
\[
a_{2,n}=\cdots=a_{k-1,n}=0\text{ \ \ and \ \ }a_{k,n}=\cdots=a_{n-2,n}=0.
\]
Since $n-3\geq2$, among the off-diagonal entries of the $n$'th row of $A,$
there are two that are zero, contrary to the hypothesis. Therefore, $G=P_{n},$
completing the induction step and the proof of Theorem \ref{tht}.
\end{proof}

Armed with Theorem \ref{tht}, we are able the complete the proof of Theorem
\ref{ths}.\medskip

\begin{proof}
[\textbf{Proof of Theorem \ref{ths}}]First we shall prove Theorem \ref{ths} if
$G$ is a unicyclic graph, i.e., if $G$ has exactly $n$ edges. Thus, let $G$ be
a connected unicyclic graph of order $n\geq3.$ It is known that $G\ $contains
a single cycle. If $G$ is not the cycle $C_{n\text{ }}$itself, then
$G\ $contains a spanning tree $H\ $with maximum degree $\Delta\left(
H\right)  \geq3;$ thus $H\neq P_{n}.$ Hence, Theorem \ref{tht} implies that
there is a path $P$ with $V\left(  P\right)  =V\left(  G\right)  $ such that
\[
F_{A}\left(  G\right)  =\sum_{1\leq i<j\leq n}d_{G}\left(  i,j\right)
a_{i,j}\leq\sum_{1\leq i<j\leq n}d_{H}\left(  i,j\right)  a_{i,j}<\sum_{1\leq
i<j\leq n}d_{P}\left(  i,j\right)  a_{i,j}.
\]
If $G$ is the cycle $C_{n\text{ }}$itself, let $i,j,k$ be three consecutive
vertices along the cycle. The removal of the edge $\left\{  i,j\right\}  $
increases the distance between $i$ \ and $j,$ i.e.,
\[
d_{G}\left(  i,j\right)  <d_{G-\left\{  i,j\right\}  }\left(  i,j\right)
\]
\ and on the other hand
\[
F_{A}\left(  G\right)  \leq F_{A}\left(  G-\left\{  i,j\right\}  \right)  .
\]
If $F_{A}\left(  G\right)  <F_{A}\left(  G-\left\{  i,j\right\}  \right)  ,$
the theorem is proved, otherwise $F_{A}\left(  G\right)  =F_{A}\left(
G-\left\{  i,j\right\}  \right)  $ and so $a_{i,j}=0.$ By the same token we
obtain $a_{j,k}=0;$ hence among the off-diagonal entries of the $k$'th row of
$A$ there are two that are zero, contrary to the hypothesis. So the theorem
holds for unicyclic graphs.

Finally, note that any connected graph $G$ that is not a tree contains a
connected unicyclic spanning subgraph $H$ or is unicyclic itself. Hence, if
$G$ is not a tree, then $F_{A}\left(  G\right)  \leq F_{A}\left(  H\right)  $
for some connected unicyclic $H,$ and thus there is a path $P$ with $V\left(
P\right)  =V\left(  G\right)  $ such that
\[
F_{A}\left(  G\right)  \leq F_{A}\left(  H\right)  <F\left(  P\right)  .
\]
The proof of Theorem \ref{ths} is completed.
\end{proof}

\section{Concluding remarks}

Results similar to Theorem \ref{thSI} have been known for the adjacency
matrix, the Laplacian, and the signless Laplacian of a connected graph $G:$

\begin{theorem}
[\cite{LoPe73}]\label{t1}The largest eigenvalue of the adjacency matrix of a
connected graph $G$ of order $n$ is minimal if and only if $G$ is a path.
\end{theorem}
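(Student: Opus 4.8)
The plan is to separate the two ways a connected graph can fail to be a path --- containing a cycle, or being a tree with a vertex of degree at least $3$ --- and to dispose of them by different elementary means. Throughout I write $\lambda(G)$ for the largest eigenvalue of the adjacency matrix of $G$, and I use the value $\lambda(P_n)=2\cos(\pi/(n+1))<2$, which follows from the explicit spectrum of the path.

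First I would handle in one stroke every connected graph that is not a tree. If $G$ is connected and contains a cycle, then $G$ has at least $n$ edges, so evaluating the Rayleigh quotient of the adjacency matrix at the all-ones vector gives $\lambda(G)\ge 2|E(G)|/n\ge 2$. Since $\lambda(P_n)=2\cos(\pi/(n+1))<2$, we get $\lambda(G)>\lambda(P_n)$; and equality $\lambda(G)=2$ would force $G$ to be regular of degree $2$, that is $G=C_n$, which still satisfies $\lambda(C_n)=2>\lambda(P_n)$. Hence every connected non-path graph with a cycle already beats the path, and it remains only to show that among the trees on $n$ vertices the path is the unique minimizer.

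For trees I would use a branch-merging (grafting) transformation. If a tree $T$ is not a path it has a vertex of degree at least $3$; taking such a vertex $v$ at maximum distance from a fixed leaf guarantees that at least two of the branches at $v$ are bare pendant paths, of lengths $k\ge\ell\ge 1$, say. Let $G_{k,\ell}$ denote the current tree and $G_{k+1,\ell-1}$ the tree obtained by detaching the end vertex of the shorter pendant path and appending it to the longer one. The decisive inequality is
\[
\lambda(G_{k,\ell})>\lambda(G_{k+1,\ell-1})\qquad(k\ge\ell\ge 1);
\]
that is, unfolding two branches into a single longer path strictly lowers the largest eigenvalue. Applying this repeatedly straightens every branch vertex until no degree exceeds $2$, leaving $P_n$; as each step strictly decreases $\lambda$, every non-path tree $T$ satisfies $\lambda(T)>\lambda(P_n)$. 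Combined with the previous paragraph, this yields the ``only if'' direction, the ``if'' direction being trivial.

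The hard part is the grafting inequality itself. A tempting shortcut --- claiming that the Perron eigenvector decreases monotonically along a pendant path and reading off the sign of the change in the Rayleigh quotient --- does \emph{not} work, because this monotonicity genuinely fails once $\lambda<2$ (already at $\lambda=\sqrt{3}$ the eigenvector entries along a pendant path of length $3$ are not monotone). Instead I would argue through characteristic polynomials. Attaching a pendant path of length $m$ at a fixed vertex produces the Chebyshev-type recursion $\phi_m=x\,\phi_{m-1}-\phi_{m-2}$, which lets me express $\phi(G_{k,\ell},x)$ and $\phi(G_{k+1,\ell-1},x)$ explicitly in terms of the base tree and the path polynomials, and then compare them at $x=\lambda(G_{k,\ell})$, the largest root of the former. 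Proving that the latter polynomial is strictly positive there --- so that its own largest root lies strictly below $\lambda(G_{k,\ell})$ --- is the single genuinely computational step, and it is where essentially all the work of the argument resides.
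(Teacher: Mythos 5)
The first thing to say is that the paper contains no proof of this statement to compare against: Theorem~\ref{t1} is quoted from Lov\'asz and Pelik\'an \cite{LoPe73} in the concluding remarks, and the paper explicitly poses as an open \emph{question} whether its own machinery (a result like Theorem~\ref{ths}) could be made to imply it. So your proposal stands or falls on its own. Its architecture is the classical one and the routine parts are correct: connected non-trees are eliminated by the Rayleigh quotient at the all-ones vector, since $\lambda(G)\geq 2\left\vert E(G)\right\vert /n\geq2>2\cos\left(  \pi/(n+1)\right)  =\lambda(P_{n})$; your choice of a branch vertex at maximum distance from a fixed leaf does guarantee two bare pendant paths; and the grafting inequality $\lambda(G_{k,\ell})>\lambda(G_{k+1,\ell-1})$ for $k\geq\ell\geq1$ (the Li--Feng lemma) is stated with the correct orientation and does terminate in $P_{n}$ after finitely many strictly decreasing steps.

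The genuine gap is that this grafting inequality --- which you yourself identify as the place ``where essentially all the work of the argument resides'' --- is never proved, and it is the entire content of the theorem once the routine reductions are made; as it stands the proposal is a reduction to a known nontrivial lemma, not a proof. Worse, the one deduction you do sketch for it is logically insufficient: from $\phi\left(  G_{k+1,\ell-1},x_{0}\right)  >0$ at the single point $x_{0}=\lambda(G_{k,\ell})$ one cannot conclude $\lambda(G_{k+1,\ell-1})<x_{0}$, because a monic characteristic polynomial is also positive on intervals strictly below its largest root (for instance below the second-largest eigenvalue). To make the inference valid you need either positivity of $\phi\left(  G_{k+1,\ell-1},\cdot\right)  $ on the whole ray $[x_{0},\infty)$, or a supplementary interlacing step: both $G_{k,\ell}$ and $G_{k+1,\ell-1}$ yield $G_{k,\ell-1}$ after deleting one pendant vertex, so Cauchy interlacing gives $\lambda_{2}(G_{k+1,\ell-1})\leq\lambda(G_{k,\ell-1})\leq x_{0}$, where $\lambda_{2}$ denotes the second-largest eigenvalue, and on $[\lambda_{2},\infty)$ positivity of the characteristic polynomial does force $x_{0}>\lambda(G_{k+1,\ell-1})$. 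Without one of these supplements the final step of your plan fails even if the polynomial identities are carried out correctly.
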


\begin{theorem}
[\cite{PeGu02}]\label{t2}The largest eigenvalue of the Laplacian of a
connected graph $G$ of order $n$ is minimal if and only if $G$ is a path.
\end{theorem}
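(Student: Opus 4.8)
The plan is to split off all graphs of large maximum degree with a crude eigenvalue bound, and then settle the low-degree remainder with explicit spectra. Write $\mu_1(G)$ for the largest Laplacian eigenvalue and $\Delta(G)$ for the maximum degree. First I would invoke the classical bound $\mu_1(G)\ge\Delta(G)+1$, valid for every graph with at least one edge. Since the largest Laplacian eigenvalue of $P_n$ is $2+2\cos(\pi/n)$, which is strictly below $4$ for every finite $n$, any connected $G$ with a vertex of degree at least $3$ obeys $\mu_1(G)\ge\Delta(G)+1\ge 4>\mu_1(P_n)$. This eliminates all such $G$ at once and leaves only the connected graphs with $\Delta(G)\le 2$, namely the paths and the cycles.

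It remains to compare $P_n$ with $C_n$, and here the cleanest device avoids trigonometry entirely. The cycle $C_n$ arises from $P_n$ by adding the single edge between its two endpoints, and adding an edge $\{i,j\}$ increases the Laplacian by the positive semidefinite rank-one matrix $(e_i-e_j)(e_i-e_j)^{\top}$. Weyl's inequality then gives $\mu_1(C_n)\ge\mu_1(P_n)$ with no computation, so $P_n$ is already a minimizer and the whole substance of the theorem lies in the equality case. Equality in this rank-one perturbation forces a top eigenvector of $C_n$ to take equal values at the two path-endpoints, and the task reduces to deciding when a highest Laplacian eigenvector of the cycle can do this.

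This equality analysis is the main obstacle, and it is a genuine one, so I would handle the cycle by its exact spectrum. When $n$ is even one has $\mu_1(C_n)=4>\mu_1(P_n)$, and the cycle is strictly worse, causing no difficulty. When $n$ is odd, however, a direct computation gives $\mu_1(C_n)=2+2\cos(\pi/n)$, exactly equal to $\mu_1(P_n)$: the odd cycle genuinely ties with the path and is a second minimizer. Thus the literal ``only if'' needs the even/odd distinction to be spelled out, and I would organize the write-up accordingly --- a reduction lemma giving $\Delta\le 2$, a one-line dismissal of even cycles via $\mu_1(C_n)=4$, and an explicit display of the shared eigenpair $2+2\cos(\pi/n)$ on both $P_n$ and $C_n$ to pin down the odd case. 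If instead the ambient class is restricted to trees, no cycles arise and uniqueness follows cleanly from the same degree reduction together with a short leaf-removal induction in the spirit of the proof of Theorem \ref{tht}.
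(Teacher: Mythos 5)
There is no proof in the paper to compare against: Theorem \ref{t2} appears only in the concluding remarks as quoted background, and the cited source \cite{PeGu02} actually proves the statement for \emph{trees} (its title is ``The path is the tree with smallest greatest Laplacian eigenvalue''), not for all connected graphs. Your analysis is mathematically correct, and its most valuable outcome is precisely your observation that the ``only if'' direction fails as literally stated in the paper: writing $\mu_{1}$ for the largest Laplacian eigenvalue, your reduction via $\mu_{1}(G)\geq\Delta(G)+1$ and $\mu_{1}(P_{n})=2+2\cos(\pi/n)<4$ correctly eliminates every connected graph with $\Delta\geq3$, and among the survivors the odd cycle genuinely ties, since the Laplacian spectrum of $C_{n}$ is $\left\{  2-2\cos\left(  2\pi k/n\right)  :0\leq k\leq n-1\right\}$, whose maximum for odd $n$ is $2-2\cos\left(  \pi-\pi/n\right)  =2+2\cos(\pi/n)=\mu_{1}(P_{n})$. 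The smallest instance already shows this: $P_{3}$ and $C_{3}=K_{3}$ both have largest Laplacian eigenvalue $3$. So the minimizer over connected graphs is not unique, and the theorem is true only in the scope you identify, namely over trees --- exactly the scope of \cite{PeGu02}. (The defect is specific to the Laplacian: for the adjacency matrix the cycle has spectral radius $2>2\cos(\pi/(n+1))$, and for the signless Laplacian the cycle has largest eigenvalue $4>2+2\cos(\pi/n)$, so Theorems \ref{t1} and \ref{t3} are unaffected.)

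Two simplifications to your write-up. First, once you restrict to trees, the degree bound alone finishes the proof: a tree with maximum degree at most $2$ is a path, so no leaf-removal induction in the spirit of Theorem \ref{tht} is needed. Second, the Weyl/rank-one perturbation step and its equality analysis become redundant once you compute the cycle spectra explicitly; you can delete that paragraph entirely and go straight to the even/odd case distinction.
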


\begin{theorem}
[\cite{Yan02}]\label{t3}The largest eigenvalue of the signless Laplacian of a
connected graph $G$ of order $n$ is minimal if and only if $G$ is a path.
\end{theorem}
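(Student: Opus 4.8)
The statement to be proved is the cited Theorem~\ref{t3}: among connected graphs of order $n$, the largest eigenvalue of the signless Laplacian is minimized exactly by $P_n$. Note that the $F_A$ machinery of this paper does not apply directly here, since $\langle Q(H)\mathbf{x},\mathbf{x}\rangle$ is an edge sum rather than a distance-weighted sum over all pairs, and we are minimizing rather than maximizing. So I would argue directly, writing $Q(H)=\mathrm{Deg}(H)+\mathrm{Adj}(H)$ for the signless Laplacian, with $\mathrm{Deg}$ the diagonal degree matrix and $\mathrm{Adj}$ the adjacency matrix, so that $\langle Q(H)\mathbf{x},\mathbf{x}\rangle=\sum_{\{u,v\}\in E(H)}(x_u+x_v)^2$. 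Each edge $\{u,v\}$ contributes the rank-one positive semidefinite term $(\mathbf{e}_u+\mathbf{e}_v)(\mathbf{e}_u+\mathbf{e}_v)^{\mathsf T}$, so if $H$ is a spanning subgraph of $H'$ then $Q(H')-Q(H)\succeq 0$ and hence $\lambda(Q(H))\le\lambda(Q(H'))$; thus $\lambda(Q)$ is nondecreasing under adding edges on a fixed vertex set.

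Next I would record the degree lower bound $\lambda(Q(G))\ge\Delta(G)+1$, where $\Delta(G)$ is the maximum degree. If $v$ has degree $\Delta$ with neighbors $u_1,\dots,u_\Delta$, then $G$ contains the star $K_{1,\Delta}$ on these $\Delta+1$ vertices as a spanning subgraph (padding with isolated vertices), and a one-line eigenvector computation gives $\lambda(Q(K_{1,\Delta}))=\Delta+1$ (take the eigenvector equal to $1$ on the leaves and $\sqrt{\Delta}$ on the center). By the monotonicity above, $\lambda(Q(G))\ge\Delta+1$. In parallel I would pin down $\lambda(Q(P_n))$ exactly: since $P_n$ is bipartite, the signature matrix $S$ of its bipartition satisfies $SQ(P_n)S=\mathrm{Deg}(P_n)-\mathrm{Adj}(P_n)=L(P_n)$, so $Q(P_n)$ and the ordinary Laplacian $L(P_n)$ are similar and share the spectrum $\{\,2-2\cos(k\pi/n):0\le k\le n-1\,\}$. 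Hence
\[
\lambda(Q(P_n))=\lambda(L(P_n))=2+2\cos(\pi/n)<4 .
\]

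The theorem then follows by a short case analysis. Let $G$ be connected of order $n$ with $G\neq P_n$. If $\Delta(G)\ge 3$, then $\lambda(Q(G))\ge\Delta(G)+1\ge 4>\lambda(Q(P_n))$. Otherwise $\Delta(G)\le 2$, so the connected graph $G$ is a path or a cycle; as $G\neq P_n$ it must be $C_n$, and $\lambda(Q(C_n))=2+\lambda(\mathrm{Adj}(C_n))=2+2=4>\lambda(Q(P_n))$. In either case $\lambda(Q(G))>\lambda(Q(P_n))$, so $P_n$ is the unique minimizer.

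The star eigenvector computation and the path spectrum are routine; the one genuinely delicate point is securing a \emph{strict} gap between the path's value and the universal threshold $4$. The bipartite reduction is what resolves this cleanly, fixing $\lambda(Q(P_n))=2+2\cos(\pi/n)$ strictly below the value $4$ attained or exceeded by every non-path competitor, and it is precisely this strictness that upgrades ``$P_n$ minimizes'' to ``$P_n$ is the only minimizer.'' One could instead invoke the bound $\lambda(Q(H))\le\max_{\{u,v\}\in E(H)}(\deg(u)+\deg(v))$ together with its equality characterization (regular or semiregular bipartite), but the explicit evaluation via bipartiteness is more self-contained and avoids having to check that $P_n$ falls outside the equality case.
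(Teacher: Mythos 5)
Your proposal is correct in substance, and it is necessarily a different route from the paper's, because the paper does not prove Theorem~\ref{t3} at all: it quotes the result from \cite{Yan02} in the concluding remarks, and in fact poses as an open \textbf{Question} whether any analogue of Theorem~\ref{ths} could be found that implies Theorems~\ref{t1}, \ref{t2} and \ref{t3}. Your opening observation --- that the $F_A$ machinery is inapplicable here because $\left\langle Q\mathbf{x},\mathbf{x}\right\rangle$ is an edge sum rather than a distance sum over all pairs, and because the extremal problem is a minimization rather than a maximization --- is precisely the point of that question, so no comparison with an in-paper argument is possible. On its own terms, your argument is a clean and complete proof of both existence and uniqueness of the minimizer: edge-monotonicity of $\lambda(Q)$ via positive semidefinite rank-one edge terms; the threshold $\lambda(Q(G))\geq\Delta(G)+1\geq 4$ when $\Delta(G)\geq 3$; the exact value $\lambda(Q(P_n))=2+2\cos(\pi/n)<4$ via the signature similarity $SQ(P_n)S=L(P_n)$ valid for bipartite graphs; and the residual case $G=C_n$ with $\lambda(Q(C_n))=2+\lambda(\mathrm{Adj}(C_n))=4$. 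The strict inequalities are exactly what deliver the ``only if'' direction.

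One slip to fix: the vector that equals $1$ on the leaves and $\sqrt{\Delta}$ on the center is an eigenvector of the \emph{adjacency matrix} of $K_{1,\Delta}$ (eigenvalue $\sqrt{\Delta}$), not of its signless Laplacian. For $Q(K_{1,\Delta})$ take instead the vector equal to $1$ on the leaves and $\Delta$ on the center: the leaf equation gives $1\cdot 1+\Delta=\Delta+1$, and the center equation gives $\Delta\cdot\Delta+\Delta\cdot 1=\Delta\left(\Delta+1\right)$, so the eigenvalue is $\Delta+1$; since this eigenvector is positive and $Q(K_{1,\Delta})$ is nonnegative and irreducible, Perron--Frobenius confirms that $\Delta+1$ is indeed the largest eigenvalue. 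With that one-line correction, your bound $\lambda(Q(G))\geq\Delta(G)+1$ stands and the rest of the argument goes through unchanged.
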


In the light of the present note we would like to raise the following
question:\medskip

\textbf{Question. }\emph{Is there a result similar to Theorem }\ref{ths}\emph{
that implies Theorems} \ref{t1}, \ref{t2},\emph{ and }\ref{t3}.\bigskip

\end{document}